\theoremstyle{plain}
\newtheorem{thm}{Theorem}[section]
\newtheorem{prop}[thm]{Proposition}
\newtheorem{cor}[thm]{Corollary}
\theoremstyle{definition}
\theoremstyle{remark}
\newcommand\N{\mathbb{N}}
\newcommand\Z{\mathbb{Z}}
\numberwithin{equation}{section}
\begin{document}
\title{Taking the risk out of \emph{RISK}: Conquer Odds in the Board Game \emph{RISK}.}
\author{Sam Hendel, Charles Hoffman, Corey Manack, and Amy Wagaman}
\begin{abstract} 
Dice odds in the board game \emph{RISK} were first investigated by Tan, fixed by Osbourne, and extended by Blatt. We generalized dice odds further, varying the number of sides and the number of dice used in a single battle. We show that the attacker needs two more than $86\%$ of the defending armies to have an over $50\%$ chance to conquer an enemy territory. By normal approximation, we show that the conquer odds transition rapidly from low chance to high chance of conquering around the $86\%+2$ threshold. 
\end{abstract}
\maketitle
\section{Introduction}
\emph{RISK} is a board game introduced by Parker Brothers in $1957$, where players compete with the objective of conquering the world. 
The game dynamics offer a rich yet tractable venue for complex interactions to unfold. We view \emph{RISK} as a toy model for agents competing for resources; indeed, the Air Force recently used \emph{RISK} to model a prototypical wargame \cite{Lee12}. The majority of the game consists of battles for individual territories on the game board. Because battles are fought by rolling dice, wise decision-making relies on understanding the chances of conquering an enemy territory. We refer to such probabilities as ``conquer odds''. 
\noindent Conquer odds in \emph{RISK} were first investigated by Tan \cite{Ta97}. However, that derivation contained a discrepancy that was corrected by Osborne \cite{Os03} and extended by Blatt \cite{Bl02} (see \cite{PW14} for a recent overview). In each of \cite {Ta97},\cite{Os03},\cite{Bl02}, conquer odds were calculated using Markov chains; our methods are combinatorial. Many \emph{RISK}-like websites exist online and some of these sites allow players to modify, e.g., the number of territories, connections between those territories, and sides of dice rolled during battle. Inspired by those sites, we extend single attack odds to an arbitrary number of sides for attack and defense (cf. \ref{sec:eo}). In section \ref{sec:co} we derive a practical formula for conquer odds (cf. \ref{sec:co}) and make several observations from this formula.     

\section{Background, Statement of Results}
\label{sec:intro}
Those unfamiliar with the rules of \emph{RISK} are directed to Tan's article \cite{Ta97} for a clear introduction. A typical game board has $42$ territories occupied by armies. In a single round of \emph{RISK}, players take turns in the attacker role, placing armies on controlled territories according to how much of the board is controlled by the attacker at the beginning of attacker's the turn. Then the attacker may elect to attack from an occupied territory to any adjacent enemy territory. In a single attack, the attacker decides whether to roll $1$, $2$, or $3$ $6$-sided dice and the defender can roll $1$ or $2$ dice, contingent on the number of armies contained within their respective territories. The top die on either side are compared, and if both sides rolled at least two dice, the next highest die are compared. For each comparison, if the attacker die is larger then the defender loses an army, otherwise the attacker loses an army. This concludes a single attack. The attacker may repeat an attack from that territory until one unit remains, or may declare an attack from another controlled territory to an adjacent enemy territory. The attacker \emph{conquers} an enemy territory once all defending armies are eliminated, thereby taking control of that territory. The game is over when one player controls all $42$ territories on the board after eliminating all opponents; this usually takes several rounds. \newline

For the rest of the paper, an \emph{engagement} is a single attack and a {\emph{battle}} between a single pair of adjacent territories is a sequence of consecutive engagements that ends when one side is defeated. The \emph{Conquer odds} of a battle are the chances that the attacker conquers the enemy territory. Calculating conquer odds rest on determining probabilities of unit losses in an engagement. In \cite{Ta97}, \cite{Os03}, engagement odds for $6$-sided dice were considered as well as conquer odds.  In \cite{Bl02}, engagement odds were extended to $s$-sided dice, $s \in \N$.  We generalize engagement odds by allowing $m$ $a$-sided dice for the attacker, $n$ $d$-sided dice for the defender $(m,n,a,d\in\N )$. For any $k\in\N$ we choose, with $0<k\leq\min\{a,d\}$, an engagement is resolved by ranking the top $k$ dice of either side and comparing pairwise: maximum to maximum, $2$nd largest to $2$nd largest, and so on. For each comparison, we remove one defending army if the defender's die is less than the attacker die, else we remove one attacking army. So ties are a win for the defender, and $k$ total armies are removed after an engagement. Table \ref{table:eo} in section \ref{sec:eo} specializes the engagement odds to each possible engagement in {\emph{RISK}} $(k=1,2)$. 

\section{Conquer Odds}
\label{sec:co}
With engagement odds in hand \ref{sec:eo}, we ask: ''What is the probability that $\hat{A}$ attacking armies defeats $\hat{D}$ defending armies?'' We first mention a few conventions to ease calculations. As before, an engagement consists of a single attack, roll of the dice, resolution, and loss of units. In \emph{RISK}, the attacker has the option to stop attacking after each engagement; we will only consider battles. Another rule of \emph{RISK} is the decreasing number of dice a player can roll in subsequent low-unit engagements $\hat{A}=3,2,1$ or $\hat{D}=1$ (c.f. \cite{Ta97}). From Table \ref{table:eo}, one could quickly verify that it is always advantageous to roll with the maximum number of dice permitted. So we assume $\hat{A}> 4,\hat{D}>2$, where sides should roll with the maximum number of dice permitted. Thus engagement odds remain the same in a battle until $\hat{A}\leq 3$ or $\hat{D}= 1$. Instead of dealing with $\hat{A},\hat{D}$ directly, let $A$ be the number of units the attacker can lose until forced to roll less dice on the subsequent engagement, $D$ be the number of units the defender can lose until forced to roll with less dice on the subsequent engagement. If we assume $k$ armies $(k=1,2)$ are lost per battle and the attacker cannot attack with the last unit then $\hat{A} = A+(k+1)$, $\hat{D} = D+k$. Thus, a {\emph{virtual battle}} of $A$ attackers against $D$ defenders is a sequence of engagements that terminates once $A\leq 0$, or $D\leq 0$. We declare that $A$ attackers \emph{virtually conquers} (VC) $D$ defenders once $D\leq 0,A>0$, and a declare a \emph{virtual loss} if $A\leq 0,D>0$. Indeed, VC odds calculate the probability that the defender loses the opportunity to roll with the highest number of dice permitted before the attacker does. We shall see that a battle that terminates when $A=0,D=0$ can be included or ignored to obtain an estimate of VC odds by sums of random variables. One could easily adapt the virtual formulas to obtain exact formulas and table \ref{table:VCAC} compares VC odds to exact conquer odds when $(k=2)$. One benefit of VC odds will be the derivation of the $86\%+2$ rule which is easy to state and matches closely to the $50\%$ cutoff for actual conquer odds.  
\subsection{One army at stake per engagement.}
\label{sec:onearmy}
To illuminate the derivation, consider the simpler case that only one army can be lost per engagement $(k=1),$ then $A=\hat A -2,\hat D -1 = D$.  To simplify notation, let $p$ be the probability that the defender loses one army in an engagement, $q=1-p$. Then $A$ attackers VC $D$ defenders if the attacker wins $D$ engagements before losing $A$ engagements. This is nothing more than the cumulative distribution of a negative binomial distribution:
\begin{equation}
\label{eq:VC1}
\Pr(\text{$A$ attackers VC $D$ defenders}) = p^{D}\sum_{k=0}^{A-1}{{D+k-1}\choose {D-1}}q^k.
\end{equation}
The probability mass function 
\[p(q,D,A)={{D+A-1}\choose {D-1}}p^Dq^{A}\]
has mean $\mu_A = qD/p$, and standard deviation $\sigma_A = \sqrt{qD}/p$.
Further, the discrete (left) differential
$$p(q,D,A)-p(q,D,A-1) = p^Dq^{A-1}\frac{(D+A-2)!}{(D-1)!(A-1)!}\left(\frac{D+A-1}{A}q-1\right),$$
changes sign from positive to negative across the threshhold $A^* = (D-1)q/p<\mu_A$. This means that near $A^*$, VC odds increase the fastest; adding additional units near $A^*$ yields the most ``bang for the buck''. 
We explain a visual method for calculating VC that will be useful when $k=2$ armies are lost per engagement in the next section. If we denote by $L$ the length of the battle, then it is clear $\min\{A,D\}\leq L < A+D$. Instead of terminating the battle once all armies on one side are lost, engage $A+D$ times, allowing for negative units. These extended battles are enumerated by lattice paths that start at $(0,0)$, terminate on the line $y=(a+d)-x$. These paths take a unit step north when the attacker loses one army and a unit step east if the attacker kills one army. For $0\leq k\leq A+D,$ the probability that an extended battle terminates at $(k,(A+D)-k)$ is
$$b(A+D,p,k) = {{A+D}\choose k}p^kq^{(A+D)-k}.$$
Extended battles of $A+D$ engagements are binomially distributed along lattice points in the first quadrant that lie on the diagonal $y=A+D-x$. In an ordinary battle, the attacker wins when a lattice paths terminates on the conquer line $x=D$ prior to crossing the defeat line $y=A$. In an extended battle, the attacker wins when a lattice path crosses the conquer line $x=d$ prior to crossing the defeat line $y=A$. Each lattice path that terminates on $y=(A+D)-x$ crosses exactly one of the lines $x=D$, $y=A$ with one exception. Lattice paths terminating at $(A,D)$ represent extended battles of mutual destruction (MD). By Pascal's formula:
\begin{align}
\Pr(\text{MD in $(A+D)$ turns, $A$ att, $D$ def}) &= {{A+D}\choose D}p^Dq^A\notag \\
&= {{A+D-1}\choose {A-1}}p^Dq^A + {{A+D-1}\choose {D-1}}p^Dq^A \label{MD2D}
\end{align}
The left term of \eqref{MD2D} consists of MD that take a final step north; these are the MD's where the defender lost all its units first and thus contributes to VC odds. Combining the two contributions two VC odds yields
\begin{equation}
\label{eq:VC2}
\Pr(\text{$A$ attackers VC $D$ defenders}) = {{A+D-1}\choose {A-1}}p^Dq^{A}+\sum_{k=0}^{A-1}{{A+D}\choose k}p^{(A+D)-k}q^{k}
\end{equation}
We now show that the two formulas for VC odds \eqref{eq:VC1}, \eqref{eq:VC2}, are equal.
\begin{prop}
\label{latticepatheq}
For all $A>0,D>0$, the probability that $A$ attackers VC $D$ defenders is 
\begin{equation}
\label{almostbin}
p^{D}\sum_{k=0}^{A-1}{{D+k-1}\choose {D-1}}q^k= {{A+D-1}\choose {A-1}}p^Dq^{A}+\sum_{k=0}^{A-1}{{A+D}\choose k}p^{(A+D)-k}q^{k}
\end{equation}
\end{prop}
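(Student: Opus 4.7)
The plan is to rewrite the negative-binomial cumulative distribution on the left as a binomial upper tail, and then to verify that the two-piece right-hand side collapses to the same binomial tail once Pascal's identity is applied and the indices are shifted.

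First, I would invoke the standard negative binomial / binomial tail duality,
\begin{equation*}
p^{D}\sum_{k=0}^{A-1}\binom{D+k-1}{D-1}q^{k} \;=\; \sum_{k=0}^{A-1}\binom{A+D-1}{k}p^{A+D-1-k}q^{k},
\end{equation*}
which follows either from a direct combinatorial identification (both sides equal the probability of at least $D$ successes in $A+D-1$ independent Bernoulli$(p)$ trials) or from a short induction on $A$ using Pascal's rule. Call this common value $S$; this is a cleaner target to aim the right-hand side at than the original negative-binomial expression.

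Next, I would apply Pascal's identity $\binom{A+D}{k}=\binom{A+D-1}{k}+\binom{A+D-1}{k-1}$ termwise inside the last sum of \eqref{almostbin} and re-index $j=k-1$ in the piece coming from $\binom{A+D-1}{k-1}$. The outcome is
\begin{equation*}
\sum_{k=0}^{A-1}\binom{A+D}{k}p^{A+D-k}q^{k}
= p\sum_{k=0}^{A-1}\binom{A+D-1}{k}p^{A+D-1-k}q^{k}
+ q\sum_{j=0}^{A-2}\binom{A+D-1}{j}p^{A+D-1-j}q^{j}.
\end{equation*}
The first sum is $pS$ and the second is $q\bigl(S-\binom{A+D-1}{A-1}p^{D}q^{A-1}\bigr)$, so together they equal $(p+q)S-\binom{A+D-1}{A-1}p^{D}q^{A}=S-\binom{A+D-1}{A-1}p^{D}q^{A}$. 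Adding back the mutual-destruction term $\binom{A+D-1}{A-1}p^{D}q^{A}$ from the right-hand side of \eqref{almostbin} restores exactly $S$, which is the left-hand side.

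The main obstacle is purely bookkeeping: correctly tracking the $k=0$ boundary in the Pascal split (where $\binom{A+D-1}{-1}=0$) and the dropped $k=A-1$ term after the re-index, so that the isolated binomial coefficient cancels with the MD contribution. Once that is in place, the pleasant content of the proposition is visible: the mutual-destruction correction identified in \eqref{MD2D} is precisely the algebraic adjustment needed to align the extended-lattice-path count with the negative-binomial count. A direct induction on $A$ or a generating-function argument would also work, but the Pascal-plus-reindex route keeps both probabilistic interpretations transparent.
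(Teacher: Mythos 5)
Your proof is correct: the Pascal split of $\binom{A+D}{k}$, the vanishing $\binom{A+D-1}{-1}$ term at $k=0$, and the dropped $j=A-1$ term after reindexing combine so that the mutual-destruction term $\binom{A+D-1}{A-1}p^Dq^{A}$ cancels exactly, reducing both sides to the binomial tail $S=\sum_{k=0}^{A-1}\binom{A+D-1}{k}p^{A+D-1-k}q^{k}$, and the duality you invoke is the legitimate one-line event equivalence (``$D$-th success occurs by trial $A+D-1$'' iff ``at least $D$ successes in $A+D-1$ trials''), so there is no circularity with the proposition itself, which concerns $A+D$ trials plus a correction. Your route is genuinely different from the paper's. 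The paper works from the left-hand side: it inserts $1=(p+q)^{A-k}$ into the negative-binomial sum, expands to a double sum, and interprets each term $\binom{D+k-1}{D-1}\binom{A-k}{l}q^{k+l}p^{D+A-(k+l)}$ as an extended battle decomposed by its first passage through the conquer line $x=D$, then reindexes over the terminal height $h=k+l$; this is a self-contained bijective argument that never appeals to the classical CDF duality and that exhibits combinatorially which mutual-destruction paths contribute to VC, which is what motivates the estimate \eqref{almostbin2} and the analogous handling of mutual destruction in the two-armies case. You instead take the negative-binomial/binomial tail identity over $A+D-1$ trials as a known lemma and finish with a purely algebraic Pascal-plus-reindex computation on the right-hand side. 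Your version is shorter and makes transparent exactly which standard fact is being exploited; the paper's version buys the lattice-path explanation of the correction term that the rest of Section \ref{sec:co} builds on. Either argument is complete as written.
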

\begin{proof}
Our proof is bijective. Observe,
\begin{align*}
p^{D}\sum_{k=0}^{A-1}{{D+k-1}\choose {D-1}}q^k &= p^{D}\sum_{k=0}^{A-1}{{D+k-1}\choose {D-1}}q^k1^{A-k}\\
								&= p^{D}\sum_{k=0}^{A-1}{{D+k-1}\choose {D-1}}q^k(p+q)^{A-k}\\
&= p^{D}\sum_{k=0}^{A-1}{{D+k-1}\choose {D-1}}q^k\sum_{l=0}^{A-k}{{A-k}\choose l}p^{A-k-l}q^{l}\\
&= \sum_{k=0}^{A-1}\sum_{l=0}^{A-k}{{D+k-1}\choose {D-1}}{{A-k}\choose l}q^{k+l}p^{D+A-(k+l)}
\end{align*}
Consider one term in this double summation 
\[{{D+k-1}\choose {D-1}}{{A-k}\choose l}q^{k+l}p^{D+A-(k+l)}.\] Notice, If we let $h=k+l$ be the height of the terminal vertex of an extended battle, then this term represents the probability that an extended battle terminates at $((A+D)-h,h)$, taking $k$ north steps prior to first passage through $x=D$, then $l$ north steps after. When $h<A$, the probability that an extended battle terminates at $((a+d)-h,h)$ is partitioned over all (k,l) pairs satisfying $h=k+l$, and when $h=A$ only the extended battles reaching $x=D$ before reaching $y=A$ contribute to VC. These terms are all accounted for in the double summation. Reindexing,  
\begin{align*}
\sum_{k=0}^{A-1}\sum_{l=0}^{A-k}{{D+k-1}\choose {D-1}}{{A-k}\choose l}q^{k+l}p^{D+A-(k+l)} &= \sum_{h=0}^{A-1}\sum_{l+k=h}{{D+k-1}\choose {D-1}}{{A-k}\choose l}p^{A-k-l}q^{k+l}\\
&+p^D\sum_{k=0}^{A-1}{{D+k-1}\choose {D-1}}q^kq^{A-k}\\
&= {{D+A-1}\choose {A-1}}p^Dq^A+\sum_{h=0}^{A-1}{{A+D}\choose h}q^hp^{A+D-h}\qedhere \\
\end{align*}
\end{proof}
\noindent Prop \ref{latticepatheq} is interesting in its own right, expressing the negative binomial distribution as a binomial distibution plus the additional term ${{D+A-1}\choose {A-1}}p^Dq^A$, and 
\[0<{{D+A-1}\choose {A-1}}p^Dq^A<{{D+A}\choose {A}}p^Dq^A.\]
yields the fundamental estimate
\begin{equation}
\label{almostbin2} 
\sum_{h=0}^{A-1}{{A+D}\choose h}q^hp^{A+D-h}< \Pr(\text{$A$ attackers VC $D$ defenders}) < \sum_{h=0}^{A}{{A+D}\choose h}q^hp^{A+D-h}
\end{equation}

\subsection{Two armies at stake per engagement}
\label{sec:twoarmies}
Those familiar with \emph{RISK} know that the most common engagement has two armies are at stake $(k=2)$. This happens when the attacker rolls $m= 3$ dice and the defender rolls $n= 2$ dice so we specialize to this case. Let $\hat A - 3= A, \hat D -2= D$. Following \cref{sec:onearmy}, we outline an enumeration strategy. Picture the part of $\Z^3$ that lies in the nonnegative first octant. Call a {\emph{lattice path}} in $(\Z^+)^3$ any path from $(0,0,0)$ to $(x,y,z)\in(\Z^+)^3$ that takes unit steps in the $e_1,e_2,$ or $e_3$ directions. After an engagement, take a unit step in the $e_1$ direction if the defender lost $2$ armies, a unit step in the $e_2$ direction if each lost $1$ army and a unit step in the $e_3$ direction if the attacker lost $2$ armies. Under this identification, a lattice path records the history of a particular battle. Tracing any lattice path terminating at $(x,y,z)$ shows, of the $2x+2y+2z$ armies lost,  $2x+y$ were defenders, and $y+2z$ were attackers.  So calculating $\Pr(\text{$A$ VC $D$})$
amounts to finding the probability that a lattice path of length at most $(A+D)/2$ crosses the ``conquer plane'' $2x+y=D$ before reaching the ``defeat plane'' $y+2z=A$. If we let $X$ be the random variable for the number of defenders lost in an engagement, given by
\begin{center}
\begin{tabular}{c|c}
X& probability \\ \hline
2 & $p = \Pr(\text{Defender loses $2$})$\\
1 & $q = \Pr(\text{Defender loses $1$})$\\
0 & $r = \Pr(\text{Defender loses $0$})$\\
\end{tabular}
\end{center}
then the probability that a lattice path terminates at $(x,y,z)$ is
\begin{equation}
\label{terminal}
p(x,y,z) = {{x+y+z}\choose {x+y}}{{x+y}\choose y}p^xq^yr^z=\frac{(x+y+z)!}{x!y!z!}p^xq^yr^z.
\end{equation}
Thus
$$\Pr(\text{$A$ VC $D$}) =\sum_{\substack{
            2x+y \geq D \\
            y+2z < A}}p(x,y,z)$$
which suffices to determine VC odds with a simple program. We are interested in a simple rule that is easily to remember. To this end, notice that
$X$ has mean $\mu = E[X]=2p+q$, and variance 
\begin{equation}
\label{variance}
\sigma^2 = p(2-(2p+q))^2+q(1-(2p+q))^2+r(0-(2p+q))^2=pq+4pr+qr.
\end{equation} 
We assume $A+D$ is even, leaving $A+D$ odd for the reader. Similar to the $k=1$ case in \cref{sec:onearmy}, we consider extended (virtual) battles of $(A+D)/2$ total engagements and loan units to a defeated player if necessary. Thus one side may have negative units after an extended battle is over.  Each lattice path of length $(A+D)/2$ terminates at a point on the plane $2x+2y+2z = A+D$ and crosses exactly one of the planes $2x+y = D$, $y+2z=A$, with the exception of lattice paths that terminate on the line of mutual destruction (MD) given by \[\{(x,y,z)\in (\Z^+)^3\mid 2x+y = D \text{ and }y+2z=A\}.\] If we let $X_i$ be random variable of defenders lost at engagement $i$, then the number of defenders lost in an extended battle of $(A+D)/2$ engagements is precisely 
$$Z_{(A+D)/2}=X_1+X_2+\ldots+X_{(A+D)/2}.$$
Whether we include the MD odds $\Pr(Z_{(A+D)/2}=D)$ provides the fundamental estimate of VC odds:
\begin{equation}
\label{almostnormal}
\Pr(Z_{(A+D)/2}>D)<\Pr(\text{$A$ VC $D$})<\Pr(Z_{(A+D)/2}\geq D)
\end{equation}
The benefits of \ref{almostnormal} is that classical results about sums of independent and identically distributed random variables can be brought to bear on VC odds. To begin, fix $D$ and allow $A$ to vary. The mean of $Z_{(A+D)/2}$ is $D$ exactly when $A^*$ is the solution of $$D = (A^*+D)\mu /2=(2p+q)(A^*+D)/2,$$ or 
\begin{equation}
\label{mean}
A^*=\frac{q+2r}{2p+q}D=\frac{2-\mu}{\mu}D.
\end{equation}
Rewriting \eqref{mean} as
\[ \frac{A^*}{D}=\frac{(2-\mu)(A^*+D)/2}{\mu(A^*+D)/2}.\]
allows us to interpret this result in \emph{RISK}: $A^*$ attackers are expected to VC $D$ defenders at the threshold of mutual destruction, when the expected proportion of attackers lost to defenders lost after $(A^*+D)/2$ engagements is equal to the proportion of attackers to defenders at the beginning of the battle.\\
Now we find an interval of $(A_1,A_2)$ around $A^*$ that quantifies the change in VC odds. For any $s>0$, solving
$$D=(A+D)\mu/2\pm s\sigma\sqrt{(A+D)/2}$$
for $A$ yields two solutions  
\begin{equation}
\label{eq1}
A_1 = A^*+\frac{s\sigma}{\mu^2}( s\sigma-\sqrt{4D\mu+(s\sigma)^2})
\end{equation}
\begin{equation}
\label{eq2}
A_2 = A^*+\frac{s\sigma}{\mu^2}( s\sigma+\sqrt{4D\mu+(s\sigma)^2}) 
\end{equation}
$A_1$ represents the number of attackers required so that $D$ is $s$ standard deviations below $E[Z_{(A_1+D)/2}]$, and $A_2$ the number of attackers for $D$ to be $s$ standard deviations above $E[Z_{(A_2+D)/2}]$. Notice $A_1<A^*<A_2$ and $(A_1,A_2)$ is symmetric about $A^*+(s\sigma/\mu)^2$.\\
Thus, \eqref{almostnormal},\eqref{eq1},\eqref{eq2} allow us to approximate the change in VC odds near $A^*$ by direct calculation for low-unit battles and normal approximation for big battles. Indeed, the Central Limit Theorem says $Z_{(A+D)/2}$ is well-approximated (in distribution) by the normal random variable
\[ Z=N\left(\frac{(A+D)}{2}\mu,\sqrt{\frac{(A+D)}{2}}\sigma\right),\]
when $A+D$ is sufficiently large. A reasonable criterion for normal approximation is to choose $A+D$ large enough so that $0$ is less than $3$ standard deviations below $E[Z]=\frac{(A+D)\mu}{2}$ and $2(A+D)$ is more than $3$ standard deviations above $E[Z]$. This yields the inequality
\begin{equation}
\label{eq:bigenough}
A+D>\max\left\{\frac{18\sigma^2}{\mu^2},\frac{9\sigma^2}{2(1-\mu/2)^2}\right\}
\end{equation}
whence we make the approximation
\begin{equation}
\label{normalapprox}
\Pr(\text{$A$ attackers VC $D$ defenders})\approx \Pr(Z_{(A+D)/2}>D)
\end{equation}
Normal approximation allows for decisive answers to two major questions related to VC odds.
\begin{enumerate} 
\item By \eqref{mean} $A^*$ is the (approximate) $50\%$ cutoff for VC odds when $A^*+D$ is sufficiently large \eqref{eq:bigenough}. 
\item When $A_1+D$ is large enough \eqref{eq:bigenough}, so is $A_2+D$, and by the empirical rule, 
\begin{align*}
& \Pr(A_2\text{ VC }D) - \Pr(A_1\text{ VC }D) \\ 
\approx& \Pr(Z_{A_2+D}> D)-\Pr(Z_{A_1+D}> D)\\ 
\approx & \Pr(Z> -s)-\Pr(Z> s) = \Pr(|Z|< s)
\end{align*}
Observe, 
\begin{equation}
\label{eq:changeco}
A_2-A_1 = \frac{2s\sigma}{\mu^2}\sqrt{4D\mu+(s\sigma)^2}.
\end{equation}
 If the attacker has $A_1$ armies, then a percentage increase of
\begin{equation}
\label{eq:length}
\frac{A_2}{A_1}-1= \frac{2}{\frac{2-\mu}{2s\sigma}\sqrt{D\mu}-1}\approx\frac{4s\sigma}{(2-\mu)\sqrt{D\mu}}
\end{equation}
in attacking armies yields an increase in VC odds approaching $\Pr(|Z|< s)$. Notice that this percentage tends to $0$ as $D\to\infty$
\end{enumerate}
\section{Back to \emph{RISK} once more} 
We specialize the results of \cref{sec:twoarmies} to the rules of ordinary \emph{RISK}, where the attacker rolls three six-sided dice $(a=6,m=3)$ and the defender rolls two six-sided dice $(d=6,n=2)$. Engagement odds were correctly stated in \cite{Os03} as
\begin{center}
\begin{tabular}{c|c}
$x$ & $\Pr(X=x)$ \\ \hline
2 & $p = 2890/7776\approx .3717$\\
1 & $q = 2611/7776\approx.3357$\\
0 & $r = 2275/7776\approx.2926$\\
\end{tabular}
\end{center}
Alternatively, one could specialize \eqref{twoatt} and \eqref{twodef1} to get the table above. By \eqref{variance},
\[\mu = 2p+q = \frac{8391}{7776}\approx 1.08, \text{ and }\]
\[\sigma^2 =pq+4pr+qr=\frac{4420535}{6718464}\approx.657968\]
In view of \eqref{almostnormal}, \eqref{normalapprox}, we approximate $Z_{(A+D)/2}$  by the normal distribution when
\[A+D>\max\{10.17,13.97\}. \]
Whence \eqref{mean} yields
\[A^* = \frac{2-\mu}{\mu}D = \frac{7161}{8391}\approx .853D\]
Thus $A^* = \frac{7161}{8391}D$ are expected to VC $D$ defenders. For large battles $(A+D> 14)$ normal approximation says that $A^*$ armies have an over $50\%$ chance to VC $D$ defenders. Recalling $A^*=\hat{A^*}-3$,$D=\hat{D}-1$ covert attackable units to actual units at the outset of a battle,  we have the rule
\begin{center}
An attacker with at least $\hat{A^*} = \frac{7161}{8391}(\hat{D}-1) +3$ armies to has an over $50\%$ chance of VC $\hat{D}$ defenders.
\end{center}
A conservative rule would then be for the attacker to have $86\%$ of the defending armies plus two additional units.  
For low-unit battles, table \ref{table:VCAC} compares VC odds and actual conquer odds (AC). We see that VC odds are a close match to AC odds, but conservative. One can check that the $86\% +2$ rule is a serviceable rule for small unit battles as well.   
\begin{table}
\begin{tabular}{|c|c|c|c|c|c|c|c|c|}
\label{table:VCAC}
 Defenders  $(\hat{D})$   & CO & $20\%$ & $30\%$ & $40\%$ & $50\%$ & $60\%$ & $70\%$ & $80\%$ \\  \hline
\multirow{2}{*}{2}  &VC&{4}&{4}&{5}&{5}&{5}&{5}&{6}\\
\ \ \ \! &AC &3&3&4&4&4&5&6 \\ \hline
\multirow{2}{*}{3} &VC&{4}&{4}&{5}&{6}&{6}&{6}&{8}\\
\ \ \ \! &AC &3&3&4&5&5&6&7 \\ \hline
\multirow{2}{*}{4} &VC&{5}&{5}&{6}&{6}&{7}&{8}&{9}\\
\ \ \ \! &AC &4&4&5&6&6&7&8 \\ \hline
\multirow{2}{*}{5} &VC&{5}&{6}&{6}&{7}&{8}&{9}&{10}\\
\ \ \ \! &AC &4&5&5&6&7&8&9 \\ \hline
\multirow{2}{*}{6} &VC&{6}&{7}&{7}&{8}&{9}&{10}&{11}\\
\ \ \ \! &AC &5&6&6&7&8&9&10 \\ \hline
\multirow{2}{*}{7} &VC&{6}&{7}&{8}&{9}&{10}&{11}&{12}\\
\ \ \ \! &AC &5&6&7&8&9&10&11 \\ \hline
\multirow{2}{*}{8} &VC&{7}&{8}&{9}&{10}&{11}&{12}&{13}\\
\ \ \ \! &AC &6&7&8&9&10&11&12 \\ \hline
\multirow{2}{*}{9} &VC&{8}&{8}&{10}&{10}&{12}&{13}&{14}\\
\ \ \ \! &AC &7&8&9&10&11&12&13\\ \hline
\multirow{2}{*}{10} &VC&{8}&{9}&{10}&{11}&{13}&{14}&{15}\\
\ \ \ \! &AC &7&8&10&10&12&13&15 \\ \hline
\multirow{2}{*}{11} &VC&{9}&{10}&{11}&{12}&{13}&{15}&{16}\\
\ \ \ \! &AC&{8}&{9}&{10}&{11}&{13}&{14}&{16}\\ \hline
\multirow{2}{*}{12} &VC&{9}&{11}&{12}&{13}&{14}&{16}&{17}\\
\ \ \ \! &AC&{9}&{10}&{11}&{12}&{14}&{15}&{17}\\ \hline
\multirow{2}{*}{13} &VC&{10}&{12}&{13}&{14}&{15}&{17}&{18}\\
\ \ \ \! &AC&{9}&{11}&{12}&{13}&{15}&{16}&{18}\\ \hline
\multirow{2}{*}{14} &VC&{11}&{12}&{13}&{15}&{16}&{18}&{19}\\
\ \ \ \! &AC&{10}&{11}&{13}&{14}&{15}&{17}&{19}\\ \hline
\multirow{2}{*}{15} &VC&{12}&{13}&{14}&{16}&{17}&{18}&{20}\\
\ \ \ \! &AC&{11}&{12}&{13}&{15}&{16}&{18}&{20}\\ \hline
\end{tabular}
\caption{VC odds versus AC odds. Each cell lists the minimum number of attacking armies $(\hat{A})$ required to have an $n\%$ chance of conquering $\hat{D}$ defenders.}
\end{table}
\begin{center}
\begin{figure}
\scalebox{1.25}{\includegraphics{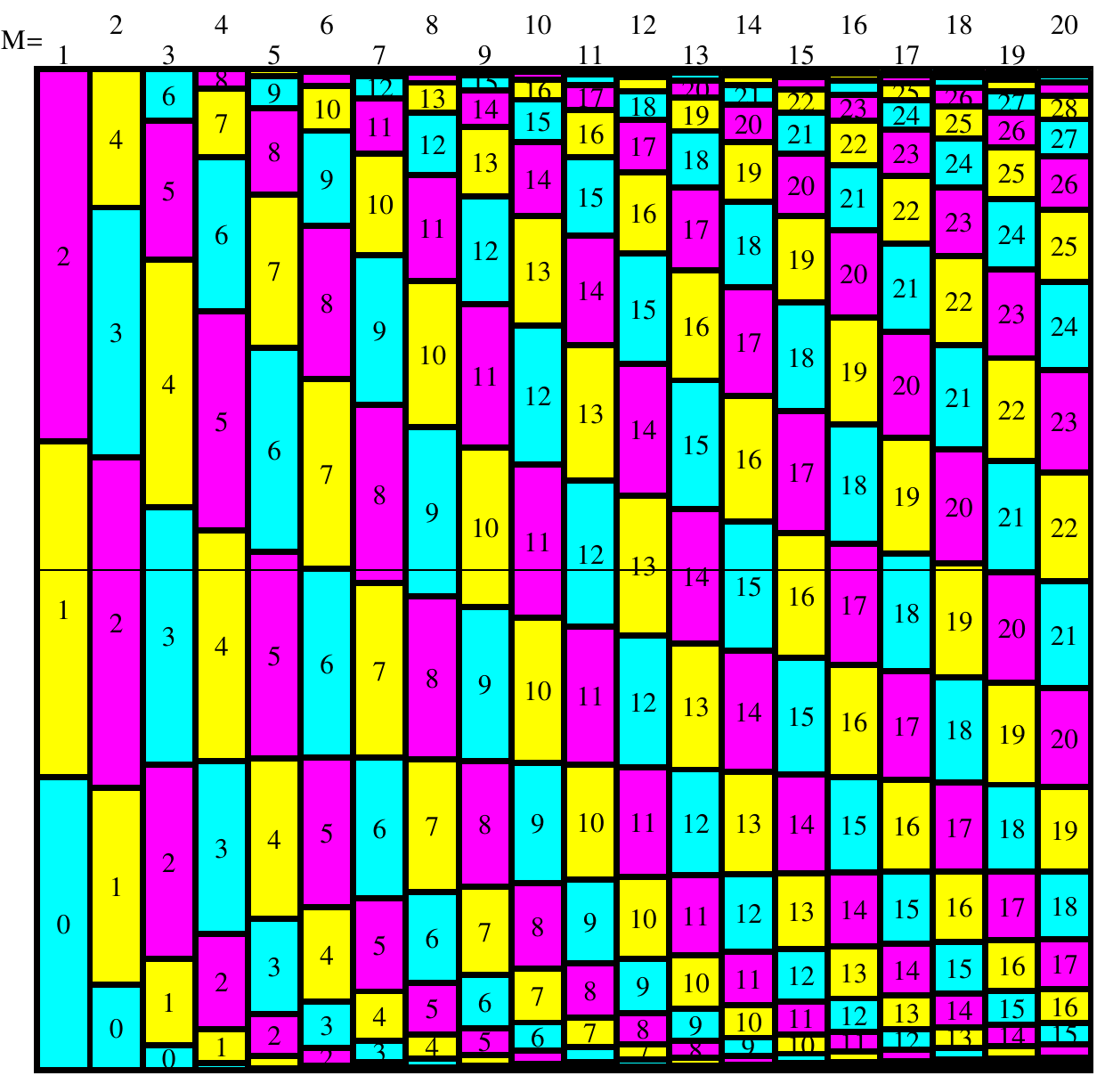}}
\caption{Column $M$ is the distribution of $Z_M=X_1+\cdots+X_M$, where $M=(A+D)/2$. The height of box $x$ in column $M$ represents the probability that an attacker kills $x$ armies in $M$ attacks. The height of the stack of boxes above box $x$ is the probability that an attacker kills more than $x$ armies in $M$ attacks. In each column, we see that box D crosses the $50\%$ line near $2M-D = .86D\ (=A^*)$, yeildiing the $86\%+2$ rule.}
\end{figure}
\end{center}
Conquer odds exhibit a phase transition around the $86\%+2$ threshold.  Fix $s>0$; normal approximation \eqref{almostnormal} and equation \label{eq:changeco} show that the interval capturing a $\Pr(|Z|<s)$ change in VC odds has length $O(\sqrt{D})$. Further, for low-unit battles, one can inspect table \ref{table:VCAC} directly to see that a transition from low VC odds to high VC odds also occurs over a small interval around $A^*$ relative to $D$.
We conclude with several implications of the $86\%+2$ rule:
\begin{itemize}
\item The $50\%$ cutoff $A^*$ is an unstable tipping point for conquer odds. Any percent deviation from $A^*$ results in a rapid departure from $50\%$ as the units are scaled up by a positive factor. For example, $10$ attackers have a $51.3\%$ chance to (virtually) conquer $9$ armies, but $100$ attacking armies have a $92.3\%$ chance of (virtually) conquering $90$ defenders. This makes the $86\% +2$ rule essential when deciding to attack. Moreover, the precise rule $\hat{A^*}=(7161/8391)(\hat{D}-1)+3$ becomes less pedantic as the battalions grow in size. 
\item In a cold war scenario, armies grow proportionally (based on bonus) between opponents with no attacks. The player with a bonus of at least $86\%$ of an opponent gains an overall attacking advantage on that opponent that increases each round.
\item When attacking a chain of $T$ connected territories, the attacker has to leave at least $1$ unit behind when transferring units into the conquered territory. If we assume full aggression, then the attacker needs approximately
\[\hat{A^*} = 0.86(\hat{D} - T-1)+3+T\] 
units to virtually conquer $\hat{D}$ defenders spread out over $T$ connected territories.
\item In a two player game, we obtain the counterintuitive result that the player with less units could be the likely victor. In particular, if the defender occupies $T$ connected territories and the attacker has one large battalion of at least $0.86(\hat{D} - T-1)+3+T$ units, the attacker has an over $50\%$ chance of victory. 
\item The proportion $7161/8391 = (2-\mu)/\mu$ is the ratio of expected attacking armies lost to expected defenders lost in a single engagement. This proportion says that each unit is $8391/7161-1 \approx 17\%$ more lethal in the attacker role than on defense.  
\end{itemize} 
\section{Conclusion}
We applied two approximations: VC odds as an approximation to conquer odds, and normal approximation to VC odds, yielding the $86\% +2$ rule. The method of normal approximation could be used to yield asymptotic results for generalized engagements, e.g., the attacker rolls four dice and the defender rolls three dice per engagement. The calculation of VC odds in the two-armies-at stake case \cref{sec:co} is a generalization of the so called ``problem of points'' \cite{Go14} whose solution set probability theory in motion. The method of normal approximation could be used to obtain an asymptotic solution for this generalized problem.  

Now that we understand the true nature of conquer odds, we can delve more deeply into the strategy behind \emph{RISK}. Questions range from mathematical to psychologocial, including:
\begin{itemize}
\item How much does initial placement determine a winner?
\item Does luck play a factor?
\item Can we quantify Aggressive/Vengeful/Passive play?
\item What percent of an enemy's stake in the game must you destroy to guarantee retaliation?
\item Can we determine good play? Do good players adhere to the $86\% +2$ rule?
\item Alliances? Cards?
\end{itemize}
We hope that this paper illuminated the $86\% +2$ rule, as well as its importance in \emph{RISK} strategy.  When coupled with the points above, one has in \emph{RISK} a rich gameplay that many have enjoyed for many years.    
\section{Appendix: Engagement Odds}
\label{sec:eo}
\noindent Let $X_i$ $(0\leq i\leq m)$ be i.i.d. random variables taking values in $\{1, 2, ... , a\}$, $Y_j$ $(0\leq j\leq n)$ be i.i.d. random variables taking values $\{1, 2, ... , d\}$. We think of $X_i$ as the face showing up on the attacker's $i$-th die, $Y_j$ as the face showing up on the defender's $j$-th die.  Let $\{X_i\}_m=\{X_1, X_2, .... , X_m\}$ be the collection of faces showing up on $m$ attack dice, $X^{(i)} = \max_i \{X_1, X_2, ...., X_m\}$ be the $i$th largest die from $\{X_i\}_m$. Define $\{Y_j\}_n, Y^{(j)}$ similarly. To compare from a collection of random variables similar to \emph{\emph{RISK}}, we introduce the comparison operator $>_{k,l}$, defined as follows: $\{X_i\}_m >_{k,l} \{Y_j\}_n$ if exactly $l$ of the $k$ comparisons $$X^{(1)} > Y^{(1)}, X^{(2)} > Y^{(2)}, \ldots , X^{(k)} > Y^{(k)}$$ are true. We derive the generalized engagement odds promised in \ref{sec:intro}, that is, 
$$\Pr\left(\{X_i\}_m >_{k,l} \{Y_j\}_n\right),$$ for various $k,l,m,n$.  We will only consider the cases $k=1,2$. The technicality in deriving these formulas stems from reconciling ties; In \emph{RISK}, ties go to the defender, and $>_{k,l}$ adopts this convention. Reconciling ties is compounded as the number of comparisons increases and we leave $k\geq 3$ cases for future research. When $X_i,Y_j$ are {\emph{continuous}} distributions, $\Pr(\text{at least one tie})=0$  and one can find a closed formula for all admissible $m,n,k,l$ (\cite{HM15}). The upcoming proofs of Propositions \ref{OneAttacker}, \ref{TwoAttacker}, \ref{TwoDefender} are standard, conditioning on $Y^{(1)},\ldots,Y^{(k)}$ then summing over the possible values $Y^{(1)},\ldots,Y^{(k)}$ can take. This enumeration scheme separates into two cases, $a>d$ and $a\leq d$. We are only dealing with $1$ or $2$ comparisons, so let $y_1$, $y_2$ be the respective values of $Y^{(1)}$,  $Y^{(2)}$. 
\begin{prop}
\label{OneAttacker}
For $m,n \in \N$, 
$$\Pr\left(\{X_i\}_m>_{1,1} \{Y_j\}_n\right) =\sum_{y=1}^{\min\{a,d\}}\frac{(a^m-y^m)(y^n-(y-1)^n)}{a^{m}d^{n}}$$
\end{prop}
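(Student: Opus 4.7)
The plan is a direct conditioning argument on the defender's top die $Y^{(1)}$, exactly in the style announced by the authors right before the statement. The event $\{X_i\}_m >_{1,1} \{Y_j\}_n$ is simply $\{X^{(1)} > Y^{(1)}\}$, so by independence of the attacker's and defender's dice,
\[
\Pr\bigl(X^{(1)} > Y^{(1)}\bigr) = \sum_{y} \Pr\bigl(Y^{(1)} = y\bigr)\,\Pr\bigl(X^{(1)} > y\bigr),
\]
and the proof reduces to computing the two one-variable distributions and pinning down the range of $y$.

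First I would compute the CDF of $Y^{(1)}$ from the i.i.d.\ assumption: for $y \in \{0,1,\ldots,d\}$,
\[
\Pr\bigl(Y^{(1)} \leq y\bigr) = \Pr(Y_1 \leq y)^n = (y/d)^n,
\]
and differencing yields the pmf $\Pr(Y^{(1)} = y) = \bigl(y^n - (y-1)^n\bigr)/d^n$ for $y \in \{1,\ldots,d\}$. Second, by the same reasoning applied to the attacker's dice,
\[
\Pr\bigl(X^{(1)} > y\bigr) = 1 - \Pr\bigl(X^{(1)} \leq y\bigr) = 1 - (y/a)^n_{\text{replaced with }m} \cdot
\]
that is, $(a^m - y^m)/a^m$ when $y \leq a$ and $0$ when $y > a$.

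Plugging these two expressions into the conditioning formula produces
\[
\Pr\bigl(X^{(1)} > Y^{(1)}\bigr) = \sum_{y=1}^{d} \frac{a^m - (y \wedge a)^m}{a^m}\cdot\frac{y^n - (y-1)^n}{d^n},
\]
and the truncation at $\min\{a,d\}$ comes from two observations which I would record explicitly: terms with $y > d$ do not arise because $Y^{(1)} \leq d$ a.s., and terms with $y > a$ contribute zero because $\Pr(X^{(1)} > y) = 0$ there. Combining both cases (whether $a \leq d$ or $a > d$, as the authors flag) upper-limits the sum at $\min\{a,d\}$ and yields the stated formula.

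The only real subtlety, and the step most likely to trip up a careful reader, is the convention that ties favor the defender: writing $\Pr(X^{(1)} > y)$ with a strict inequality (rather than $\geq$) is what forces the numerator $a^m - y^m$ instead of $a^m - (y-1)^m$. I would flag this convention once at the start of the computation and otherwise the proof is a routine two-line calculation; no induction, generating functions, or combinatorial identities are required.
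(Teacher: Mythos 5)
Your proposal is correct and follows essentially the same route as the paper: condition on the value of $Y^{(1)}$, compute $\Pr\bigl(Y^{(1)}=y\bigr)=\bigl(y^n-(y-1)^n\bigr)/d^n$ and $\Pr\bigl(X^{(1)}>y\bigr)=(a^m-y^m)/a^m$, and observe that the cases $a>d$ and $a\leq d$ both truncate the sum at $\min\{a,d\}$ because the attacker factor vanishes for $y\geq a$. Aside from the garbled line where you write the attacker's CDF (the exponent should simply be $m$), the argument matches the paper's proof step for step.
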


\begin{proof}
Evaluating 
\begin{equation}
\label{eq:cond1} 
\Pr\left(\{X_i\}_m>_{1,1} \{Y_j\}_n\right) = \sum_y \Pr\left(X^{(1)} > y\right) \Pr\left(Y^{(1)}=y\right)
\end{equation}  amounts to calculating a typical summand, then summing over suitable $y$. Consider first, the case $a > d$. Clearly, $\Pr\left( Y^{(1)} = y\right) = 0$ unless $1\leq y \leq d$, where we have
\begin{equation}
\label{cond1}
\Pr\big(Y^{(1)}=y\big) = \Pr\left(Y^{(1)} \leq y\right) - \Pr\left( Y^{(1)} \leq y-1 \right) = \frac{y^n-(y-1)^n}{d^n}
\end{equation}
Furthermore, observe that
\begin{equation}
\label{cond2}
\Pr\big(X^{(1)} > Y^{(1)} | Y^{(1)} = y\big) = 1- \Pr\left( X^{(1)} \leq y \right) =  \frac{a^m-y^m}{a^m}
\end{equation}
is also nonzero for all $1\leq y \leq d = \min\{a,d\}$. Thus, the proposition holds in this case. \\
Now, assume $a \leq d$. Notice that $$\Pr\big(X^{(1)} > Y^{(1)} | Y^{(1)} = y\big) $$  is nonzero only when $1 \leq y \leq a -1$. By virtue of formula \eqref{cond2}, we may sum equation \eqref{eq:cond1} from $y=1$ to $a=\min\{a,d\}$ and substitute into the formulas (\ref{cond1}), (\ref{cond2}) as stated.   
\end{proof} 
\begin{cor}
\label{OneDefender}
For $m,n \in \N$,
$$\Pr\big(\{X_i\}_m>_{1,0} \{Y_j\}_n\big) =1-\sum_{y=1}^{\min\{a,d\}}\frac{(a^m-y^m)(y^n-(y-1)^n)}{a^{m}d^{n}}$$
\end{cor}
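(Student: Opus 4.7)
The plan is to derive this corollary as an immediate consequence of Proposition \ref{OneAttacker} using complementary probability. Since $k=1$, the comparison $>_{1,l}$ involves only the single pairing of $X^{(1)}$ against $Y^{(1)}$, so exactly one of the two events $\{X_i\}_m >_{1,0} \{Y_j\}_n$ (the attacker's top die fails to exceed the defender's top die) or $\{X_i\}_m >_{1,1} \{Y_j\}_n$ (the attacker's top die strictly exceeds the defender's top die) must occur. These are mutually exclusive and exhaust the sample space because, under the \emph{RISK} tie-breaking convention adopted in the definition of $>_{k,l}$, ties are counted as losses for the attacker and therefore fall into the $l=0$ case rather than the $l=1$ case.

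Concretely, I would first observe the identity
\[
\Pr\!\big(\{X_i\}_m >_{1,0} \{Y_j\}_n\big) + \Pr\!\big(\{X_i\}_m >_{1,1} \{Y_j\}_n\big) = 1,
\]
and then substitute the expression from Proposition \ref{OneAttacker} to obtain the desired formula. No further conditioning on $Y^{(1)}$ is needed since that work has already been carried out in the proof of Proposition \ref{OneAttacker}, and the same case split on $a>d$ versus $a\leq d$ is implicit through the $\min\{a,d\}$ appearing in the sum.

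There is no real obstacle here; the only subtle point worth flagging in the write-up is the tie convention, which ensures that the two events form a genuine partition rather than overlapping on the measure-zero-in-the-continuous-case event of a tie. In the discrete setting of dice, ties occur with positive probability and must be assigned deliberately to one side, and the definition of $>_{k,l}$ given in the appendix assigns them to the defender, validating the complementary-probability step.
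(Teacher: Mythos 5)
Your proposal is correct and matches the paper's own proof, which likewise derives the corollary from Proposition \ref{OneAttacker} via the identity $\Pr\big(\{X_i\}_m>_{1,0} \{Y_j\}_n\big)+\Pr\big(\{X_i\}_m>_{1,1} \{Y_j\}_n\big)=1$. Your additional remark about the tie convention guaranteeing that the two events genuinely partition the sample space is a sound (if unstated in the paper) justification of that identity.
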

\begin{proof}
This result follows from Prop. \ref{OneAttacker}, and the observation that $$\Pr\big(\{X_i\}_m>_{1,0} \{Y_j\}_n\big)+\Pr\big(\{X_i\}_m>_{1,1} \{Y_j\}_n\big)=1.$$
\end{proof}

\begin{prop}
\label{TwoAttacker}
For $m,n \in \N$ and $m,n \geq 2$, 
\begin{align}
\label{twoatt}
\Pr\big(\{X_i\}_m>_{2,2} \{Y_j\}_n\big) &= \sum_{y_1=2}^{\min \{a,d\} } \sum_{y_2=1}^{y_{1}-1} \frac{n\big(y_2^{n-1}-(y_2-1)^{n-1}\big)\big(a^m-y_1^m-m(a-y_1)(y_2)^{m-1}\big)}{a^m d^n} \nonumber\\
& \quad + \sum_{y_{1}=1}^{\min\{a,d\}}\frac{\big( y_1^n-(y_1-1)^{n}-n(y_1-1)^{n-1} \big) \big(a^m+(m-1)y_1^m-(am)(y_1)^{m-1} \big)}{a^m d^n}
\end{align}
\end{prop}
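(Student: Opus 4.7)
The plan is to condition on the top two defender dice $(Y^{(1)}, Y^{(2)}) = (y_1, y_2)$ (with $y_1 \geq y_2$) and sum over admissible values, following the template already set by the proof of Proposition \ref{OneAttacker}. Because ties among defender dice carry positive probability in the discrete setting, I split into two cases, $y_1 > y_2$ and $y_1 = y_2$; these will produce the double sum and the single sum in the statement, respectively. Schematically,
\[\Pr\big(\{X_i\}_m>_{2,2}\{Y_j\}_n\big) = \sum_{y_1>y_2}\Pr\big(Y^{(1)}=y_1,Y^{(2)}=y_2\big)\Pr\big(X^{(1)}>y_1,X^{(2)}>y_2\big) + \sum_{y_1}\Pr\big(Y^{(1)}=Y^{(2)}=y_1\big)\Pr\big(X^{(1)}>y_1,X^{(2)}>y_1\big).\]

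For the defender marginals I argue combinatorially. In the strict case $y_1 > y_2$, I pick which of the $n$ dice realizes the unique maximum $y_1$ (contributing a factor $n$) and require the maximum of the remaining $n-1$ dice to equal exactly $y_2$, which yields $n(y_2^{n-1} - (y_2-1)^{n-1})/d^n$ by the standard max-CDF subtraction. In the tied case $y_1 = y_2$, I count configurations in which all $n$ dice are $\leq y_1$ and at least two equal $y_1$; subtracting the ``all strictly below $y_1$'' and ``exactly one equal to $y_1$'' contributions gives $(y_1^n - (y_1-1)^n - n(y_1-1)^{n-1})/d^n$.

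For the attacker conditional probability I partition on the number $N_1$ of attacker dice strictly exceeding $y_1$. When $y_1 > y_2$, the event $\{X^{(1)}>y_1,\, X^{(2)}>y_2\}$ decomposes as $\{N_1 \geq 2\}$ (which automatically forces $X^{(2)} > y_2$) together with $\{N_1 = 1\}$ combined with at least one of the remaining $m-1$ dice landing in $(y_2, y_1]$; after cancellation of a common $m(a-y_1)y_1^{m-1}$ term, this collapses to the factor $(a^m - y_1^m - m(a-y_1)y_2^{m-1})/a^m$ appearing in the double sum. When $y_1 = y_2$, the event is simply $\{N_1 \geq 2\}$, and complementing ``at most one die exceeds $y_1$'' yields $(a^m + (m-1)y_1^m - am\,y_1^{m-1})/a^m$.

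Finally, the range of summation follows from the same $a > d$ versus $a \leq d$ dichotomy used in the proof of Proposition \ref{OneAttacker}: in one case the defender side forces $y_1 \leq d$, in the other the attacker factor vanishes for $y_1 \geq a$, and either way the upper limit collapses to $\min\{a, d\}$. I expect the main difficulty to be purely combinatorial bookkeeping: ensuring that the strict and tied branches are disjoint and exhaustive, and that the intermediate ``exactly one attacker die exceeds $y_1$'' expression telescopes correctly into the compact coefficient $m(a-y_1)y_2^{m-1}$ rather than a messier three-term formula.
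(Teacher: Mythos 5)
Your proposal is correct and follows essentially the same route as the paper: condition on $(Y^{(1)},Y^{(2)})=(y_1,y_2)$, split into the strict ($y_1>y_2$) and tied ($y_1=y_2$) cases, compute the defender joint probabilities exactly as stated, and obtain the attacker factors by complementation, with the $a\geq d$ versus $a\leq d$ dichotomy giving the $\min\{a,d\}$ upper limit. The only cosmetic difference is that you partition on the number of attacker dice exceeding $y_1$ and cancel the common $m(a-y_1)y_1^{m-1}$ term, whereas the paper writes the same quantity directly as $1-\Pr\big(X^{(1)}\leq y_1\big)-\Pr\big(X^{(1)}>y_1,\ X^{(2)}\leq y_2\big)$; the two computations are identical in substance.
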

\begin{proof}
Adhering to our counting strategy, we sum the conditional probabilites over all values the pair $Y^{(1)}$, $Y^{(2)}$ can assume, to obtain
\begin{align*}
\Pr\big(\{X_i\}_m>_{2,2} \{Y_j\}_n\big) = \sum_{y_1=2}^{\min \{a,d\} }& \sum_{y_2=1}^{y_{1}-1}\Pr\left(X^{(1)}>y_1,X^{(2)}>y_2 \right)\Pr\left(Y^{(1)}=y_1,Y^{(2)}=y_2\right)\\
+ \sum_{y_1=1}^{d}& \Pr\left(X^{(1)}>y_1 ,X^{(2)}>y_1 \right)\Pr\left(Y^{(1)}=y_1,Y^{(2)}=y_1\right)\\
\end{align*}
We begin with the case $a \geq d$, let $1 \leq y_2 \leq y_1 \leq d$. Recall that one should treat $y_i$ as a value of $Y^{(i)}$. Notice, when $y_2 < y_1$,
\begin{align*} 
\Pr\big(Y^{(1)} = y_1 \text{ and } Y^{(2)}=y_2\big) & = \Pr\big(Y_j=y_1 \text{ for some } j \text{ and } \max_{i \ne j}\{Y_i\}=y_2 \big)\\
& = \frac{n\big(y_2^{n-1}-(y_2-1)^{n-1}\big)}{d^n}.
\end{align*}
 When $y_1=y_2$,
\begin{align*}
\Pr\big( & Y^{(1)} = y_1 \text{ and } Y^{(2)}=y_1\big)\\[1 mm]
& = \Pr\big(Y^{(1)} \leq y_1 \big)-\Pr\big(Y^{(1)} \leq (y_1-1) \big)-\Pr\big(Y^{(1)}=y_1 \text{ and } Y^{(2)} \leq (y_1-1) \big)\\[1 mm]
& = \frac{y_1^n-(y_1-1)^{n}-n(y_1-1)^{n-1}}{d^n}.
\end{align*}
For the pair $X^{(1)}$ and $X^{(2)}$, we compute that for $1 \leq y_2 \leq y_1 \leq d$
\begin{align*}
\Pr\big(& X^{(1)} > y_1 \text{ and } X^{(2)} > y_2 \big)\\
&= 1 - \Pr \big( X^{(1)} \leq y_1\big)-\Pr\big( X^{(1)} > y_1 \text{ and } X^{(2)} \leq y_2 \big)\\
&= \frac{a^m-y_1^m-m(a-y_1)(y_2)^{m-1}}{a^m}.
\end{align*}
Therefore, we have computed that
\begin{align*}
\Pr\big(& \{X_i\}_m>_{2,2} \{Y_j\}_n\big) \\
&= \sum_{y_1=2}^{d} \sum_{y_2=1}^{y_{1}-1} \frac{n\big(y_2^{n-1}-(y_2-1)^{n-1}\big)\big(a^m-y_1^m-m(a-y_1)(y_2)^{m-1}\big)}{a^m d^n}\\
& \quad + \sum_{y_{1}=1}^{d}\frac{\big( y_1^n-(y_1-1)^{n}-n(y_1-1)^{n-1} \big) \big(a^m+(m-1)y_1^m-(am)(y_1)^{m-1} \big)}{a^m d^n}
\end{align*}
Substituting $d=\min\{ a,d\}$ affirms formula (\ref{twoatt}) in this case.\\
Finally, we note that when $a < d$, $\min\{ a,d\} = a$, and $\Pr\big( X^{(1)} > y_1 \text{ and } X^{(2)} > y_2 \big)=0$ whenever $y_1>a$. Thus, nonzero contributions to the sum can occur only when $1\leq y_1\leq a$. The derivation of summands and the range of $y_2$ are identical to the $d\leq a$ case, thus formula (\ref{twoatt}) holds for this case as well.
\end{proof}
\begin{prop}
\label{TwoDefender}
For $m,n \in \N$ and $m,n \geq 2$, 
\begin{align}
\label{twodef1}
\Pr\big(&\{X_i\}_m>_{2,0} \{Y_j\}_n\big) = \nonumber \\
&= \sum_{y_1=2}^{d} \sum_{y_2=1}^{y_{1}-1} \frac{n\big(y_2^{n-1}-(y_2-1)^{n-1}\big)\big(m(y_1-y_2)y_{2}^{m-1}+y_{2}^{m} \big)}{a^m d^n}\\
& \quad + \sum_{y_{1}=1}^{d}\frac{\big( y_1^n-(y_1-1)^{n}-n(y_1-1)^{n-1} \big) \big(y_{1}^{m} \big)}{a^m d^n} \nonumber
\end{align}
when $a \geq d$ and
\begin{align}
\label{twodef2}
\Pr\big(&\{X_i\}_m>_{2,0} \{Y_j\}_n\big) = \nonumber \\
&= \sum_{y_1=2}^{a} \sum_{y_2=1}^{y_{1}-1} \frac{n\big(y_2^{n-1}-(y_2-1)^{n-1}\big)\big(m(y_1-y_2)y_{2}^{m-1}+y_{2}^{m} \big)}{a^m d^n} \nonumber \\
& \quad + \sum_{y_{1}=1}^{a}\frac{\big( y_1^n-(y_1-1)^{n}-n(y_1-1)^{n-1} \big) \big(y_{1}^{m} \big)}{a^m d^n} \\
& \quad + \sum_{y_{1}=a+1}^d \sum_{y_2=1}^a \frac{n\big(y_2^{n-1}-(y_2-1)^{n-1}\big)\big(m(a-y_2)y_{2}^{m-1}+y_{2}^{m} \big)}{a^m d^n}\nonumber \\
& \quad +  \frac{a^m d^n-n(d-a)a^{n+m-1}-a^{m+n}}{a^m d^n}\nonumber
\end{align}
when $a \leq d$.
\end{prop}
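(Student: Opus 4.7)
The plan is to follow the same conditioning strategy used in the proof of Proposition \ref{TwoAttacker}: decompose
\[ \Pr\big(\{X_i\}_m>_{2,0}\{Y_j\}_n\big) = \sum_{y_1 \geq y_2} \Pr\big(X^{(1)} \leq y_1,\, X^{(2)} \leq y_2\big)\,\Pr\big(Y^{(1)} = y_1,\, Y^{(2)} = y_2\big), \]
where the relation $>_{2,0}$ is equivalent to the defender winning both comparisons, i.e.\ $X^{(1)} \leq Y^{(1)}$ and $X^{(2)} \leq Y^{(2)}$ simultaneously, since ties favor the defender. The joint defender masses for the two sub-cases $y_1 = y_2$ and $y_1 > y_2$ are exactly those computed inside the proof of Proposition \ref{TwoAttacker}, so I would reuse them verbatim.

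The new ingredient is the attacker-side factor. For $y_2 \leq y_1 \leq a$, I would derive $\Pr\big(X^{(1)} \leq y_1,\, X^{(2)} \leq y_2\big)$ by partitioning on the number of attacker dice that land in $(y_2, y_1]$: all $m$ dice $\leq y_2$ contributes $y_2^m/a^m$, and exactly one die in $(y_2, y_1]$ with the remaining $m-1$ dice $\leq y_2$ contributes $m(y_1-y_2)y_2^{m-1}/a^m$; any configuration with two or more dice above $y_2$ forces $X^{(2)} > y_2$. When $y_1 = y_2$ this expression collapses to $y_1^m/a^m$. Multiplying by the defender masses and summing over $1 \leq y_2 \leq y_1 \leq d$ produces (\ref{twodef1}) in the case $a \geq d$.

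For the case $a \leq d$ I would partition the outer sum into three regions. Region (a), $1 \leq y_2 \leq y_1 \leq a$, produces the first two lines of (\ref{twodef2}) by the argument above. Region (b), $1 \leq y_2 \leq a < y_1 \leq d$, has $X^{(1)} \leq a < y_1$ automatic, so one substitutes $a$ for $y_1$ in the attacker-side formula, yielding $(m(a-y_2)y_2^{m-1} + y_2^m)/a^m$ and producing the third line. Region (c), $a < y_2 \leq y_1 \leq d$, has both $X^{(1)} \leq y_1$ and $X^{(2)} \leq y_2$ automatic, so the conditional attacker-side factor is $1$ and the contribution collapses to the marginal probability $\Pr\big(Y^{(2)} > a\big)$, i.e.\ the event that at least two defender dice exceed $a$. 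Computing that marginal by complementary counting (all $n$ dice $\leq a$, or exactly one die $> a$) recovers the closed-form expression in the last line of (\ref{twodef2}).

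The main obstacle is the bookkeeping in the $a \leq d$ case: one must recognize that region (b) corresponds to truncating $y_1$ at $a$ inside the attacker-side formula, and that region (c) collapses to a single marginal probability of the defender's second-largest die rather than being enumerated term by term. Without these two observations the last two lines of (\ref{twodef2}) appear to have a different character from the rest; with them, the proof reduces to a careful splitting of the sum already assembled in the $a \geq d$ case.
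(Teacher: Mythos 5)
Your proposal is correct and follows essentially the same route as the paper's proof: conditioning on $(Y^{(1)},Y^{(2)})$, reusing the defender masses from Proposition \ref{TwoAttacker}, computing the attacker-side factor $\big(m(y_1-y_2)y_2^{m-1}+y_2^m\big)/a^m$, and splitting the $a\leq d$ case into the same three regions, with the final term obtained exactly as the paper does via the probability that both top defender dice exceed $a$. No substantive differences to report.
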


\begin{proof}
As always, we first restrict to the case $a \geq d$ and consider which values $Y^{(1)}$, $Y^{(2)}$ can assume.  For each pair of values $1 \leq y_2 \leq y_1 \leq d$,  
\begin{align*}
\Pr\big(\{X_i\}_m>_{2,0} \{Y_j\}_n\big) = \sum_{y_1=2}^d & \sum_{y_2=1}^{y_{1}-1}\Pr\left(X^{(1)}\leq y_1,X^{(2)}\leq y_2 \right)\Pr\left(Y^{(1)}=y_1,Y^{(2)}=y_2\right)\\
+ \sum_{y_1=1}^{d}& \Pr\left(X^{(1)}\leq y_1 ,X^{(2)}\leq y_1 \right)\Pr\left(Y^{(1)}=y_1,Y^{(2)}=y_1\right).\\
\end{align*}
As the calculations of $$\Pr\left(Y^{(1)}=y_1,Y^{(2)}=y_2\right),$$ $$\Pr\left(Y^{(1)}=y_1,Y^{(2)}=y_1\right)$$ are identical to those contained in the proof of proposition \ref{TwoAttacker}, it only remains to calculate
\begin{align*}
& \Pr\big(X^{(1)} \leq y_1 \text{ and } X^{(2)} \leq y_2 \big) \\  
= &\Pr\left( y_2 < X^{(1)} \leq y_1 \text{ and } X^{(2)} \leq y_2\right) + \Pr\left( X^{(1)} \leq y_2 \right) \\
= & \frac{m(y_1-y_2)y_{2}^{m-1}+y_{2}^{m}}{a^m}.
\end{align*}
Thus, by summing over all permissible pairs $y_1$ and $y_2$, we have that
\begin{align*}
\Pr\big(& \{X_i\}_m>_{2,0} \{Y_j\}_n\big) \\
&= \sum_{y_1=2}^{d} \sum_{y_2=1}^{y_{1}-1} \frac{n\big(y_2^{n-1}-(y_2-1)^{n-1}\big)\big(m(y_1-y_2)y_{2}^{m-1}+y_{2}^{m} \big)}{a^m d^n}\\
& \quad + \sum_{y_{1}=1}^{d}\frac{\big( y_1^n-(y_1-1)^{n}-n(y_1-1)^{n-1} \big) \big(y_{1}^{m} \big)}{a^m d^n}
\end{align*}
which is exactly formula (\ref{twodef1}).\\
Next, we consider the case where $a \leq d$.  To proceed, we must consider three cases for the values $Y^{(1)},Y^{(2)}$ can take.  The first is  $1 \leq y_2 \leq y_1 \leq a$, which is exactly formula (\ref{twodef1}) except for the change in bound $d \to a$.  The second case is  $1 \leq y_2 \leq a < y_1 \leq d$.  For this case, we note that
\begin{align*}
\Pr\big(X^{(1)} \leq y_1 \text{ and } X^{(2)} \leq y_2\big) &= \Pr\big( X^{(1)} \leq y_2 \big) + \Pr\big(y_2 < X^{(1)} \leq a \text{ and } X^{(2)} \leq y_2 \big) \\
&= \frac{y_2^m+m(a-y_2)y_2^{m-1}}{a^m}
\end{align*}
In the third case $ a< y_2 \leq y_1 \leq d$, notice  
\begin{align*}
\Pr\big(X^{(1)} \leq a \text{ and } X^{(2)} \leq a\big) = 1,
\end{align*}
with
\begin{align*}
\Pr\big(a < Y^{(1)} \leq d \text{ and } a < Y^{(2)} \leq d \big)& = 1-\Pr\big(a < Y^{(1)} \leq d \text{ and } Y^{(2)} \leq a\big) - \Pr\big(Y^{(1)} \leq a\big)\\
& = 1-\frac{n(d-a)a^{n-1}}{d^n}-\frac{a^{n}}{d^n}\\
& = \frac{a^m d^n-n(d-a)a^{n+m-1}-a^{m+n}}{a^m d^n}
\end{align*}  Collecting the terms contributed from each case, we conclude 
\begin{align*}
\Pr\big(&\{X_i\}_m>_{2,0} \{Y_j\}_n\big) = \\
&= \sum_{y_1=2}^{a} \sum_{y_2=1}^{y_{1}-1} \frac{n\big(y_2^{n-1}-(y_2-1)^{n-1}\big)\big(m(y_1-y_2)y_{2}^{m-1}+y_{2}^{m} \big)}{a^m d^n}\\
& \quad + \sum_{y_{1}=1}^{a}\frac{\big( y_1^n-(y_1-1)^{n}-n(y_1-1)^{n-1} \big) \big(y_{1}^{m} \big)}{a^m d^n}\\
& \quad + \sum_{y_{1}=a+1}^d \sum_{y_2=1}^a \frac{n\big(y_2^{n-1}-(y_2-1)^{n-1}\big)\big(m(a-y_2)y_{2}^{m-1}+y_{2}^{m} \big)}{a^m d^n}\\
& \quad +  \frac{a^m d^n-n(d-a)a^{n+m-1}-a^{m+n}}{a^m d^n}\qedhere
\end{align*}
\end{proof}
With equations \ref{twodef1}, \ref{twoatt}, in hand, the $k=2$, $l=1$ probability can be calculated handily, using \begin{equation} \label{oneandone}
\Pr\big( \{X_i\}_m>_{2,1} \{Y_j\}_n\big) = 1 - \Pr\big( \{X_i\}_m>_{2,2} \{Y_j\}_n\big) - \Pr\big( \{X_i\}_m>_{2,0} \{Y_j\}_n\big) \end{equation}

\noindent With the aid of classical summation formulas, Propositions \ref{OneAttacker}, \ref{TwoAttacker}, \ref{TwoDefender}, Corollary \ref{OneDefender}, and \eqref{oneandone} can be specialized to the six probability distributions arising in the game \emph{RISK}, when the attacker rolls $a$-sided dice and the defender rolls $d$-sided dice. The results are summarized in table \ref{table:eo}.

\begin{table}[t]
\label{table:eo}
\centering
\renewcommand{\arraystretch}{1.19}
\begin{tabular}{|c|c|c|c|c|c|c|}
\hline
\multicolumn{7}{|c|}{$m= \#$ att dice, $n= \#$ def dice, $a =$ sides per att die, $d =$ sides per def die}\\ \hline
$m$   & $n$ & $k$ & $l$ & Prob & case & Specialization of: \\ \hline
\multirow{2}{*}{$1$} & \multirow{2}{*}{$1$} & \multirow{2}{*}{$1$} & \multirow{2}{*}{$1$} & $\frac{2a-d-1}{2a}$ & $a \geq d$ & \multirow{2}{*}{Prop. \ref{OneAttacker}}   \\  \cline{5-6}
 &  &  &  & $\frac{a-1}{2d}$ & $a \leq d$ & \\ \hline
\multirow{2}{*}{$1$} & \multirow{2}{*}{$1$} & \multirow{2}{*}{$1$} & \multirow{2}{*}{$0$} & $\frac{d+1}{2a}$ & $a \geq d$ & \multirow{2}{*}{Cor. \ref{OneDefender}}   \\  \cline{5-6}
 &  &  &  & $\frac{2d-a+1}{2d}$ & $a \leq d$ & \\ \hline
\multirow{2}{*}{$2$} & \multirow{2}{*}{$1$} & \multirow{2}{*}{$1$} & \multirow{2}{*}{$1$} & $\frac{6a^2-2d^2-3d-1}{6a^2}$ & $a \geq d$ & \multirow{2}{*}{Prop. \ref{OneAttacker}}   \\  \cline{5-6}
 &  &  &  & $\frac{4a^2-3a-1}{6ad}$ & $a \leq d$ & \\ \hline
\multirow{2}{*}{$2$} & \multirow{2}{*}{$1$} & \multirow{2}{*}{$1$} & \multirow{2}{*}{$0$} & $\frac{2d^2+3d+1}{6a^2}$ & $a \geq d$ & \multirow{2}{*}{Cor. \ref{OneDefender}}   \\  \cline{5-6}
 &  &  &  & $\frac{6ad-4a^2+3a+1}{6ad}$ & $a \leq d$ & \\ \hline
\multirow{2}{*}{$3$} & \multirow{2}{*}{$1$} & \multirow{2}{*}{$1$} & \multirow{2}{*}{$1$} & $\frac{4a^3-d^3-2d^2-d}{4a^3}$ & $a \geq d$ & \multirow{2}{*}{Prop. \ref{OneAttacker}}   \\  \cline{5-6}
 &  &  &  & $\frac{3a^2-2a-1}{4ad}$ & $a \leq d$ & \\ \hline
\multirow{2}{*}{$3$} & \multirow{2}{*}{$1$} & \multirow{2}{*}{$1$} & \multirow{2}{*}{$0$} & $\frac{d^3+2d^2+d}{4a^3}$ & $a \geq d$ & \multirow{2}{*}{Cor. \ref{OneDefender}}   \\  \cline{5-6}
 &  &  &  & $\frac{4ad-3a^2+2a+1}{4ad}$ & $a \leq d$ & \\ \hline
\multirow{2}{*}{$1$} & \multirow{2}{*}{$2$} & \multirow{2}{*}{$1$} & \multirow{2}{*}{$1$} & $\frac{6ad-4d^2-3d+1}{6ad}$ & $a \geq d$ & \multirow{2}{*}{Prop. \ref{OneAttacker}}   \\  \cline{5-6}
 &  &  &  & $\frac{2a^2-3a+1}{6d^2}$ & $a \leq d$ & \\ \hline
\multirow{2}{*}{$1$} & \multirow{2}{*}{$2$} & \multirow{2}{*}{$1$} & \multirow{2}{*}{$0$} & $\frac{4d^2+3d-1}{6ad}$ & $a \geq d$ & \multirow{2}{*}{Cor. \ref{OneDefender}}   \\  \cline{5-6}
 &  &  &  & $\frac{6d^2-2a^2+3a-1}{6d^2}$ & $a \leq d$ & \\ \hline
\multirow{2}{*}{$2$} & \multirow{2}{*}{$2$} & \multirow{2}{*}{$2$} & \multirow{2}{*}{$2$} & $\frac{6a^2d-4ad^2-6ad-2a+2d^2+3d+1}{6a^2 d}$ & $a \geq d$ & \multirow{2}{*}{Prop. \ref{TwoAttacker}}   \\  \cline{5-6}
 &  &  &  & $\frac{2a^3-4a^2+a+1}{6a d^2}$ & $a \leq d$ & \\ \hline
\multirow{2}{*}{$2$} & \multirow{2}{*}{$2$} & \multirow{2}{*}{$2$} & \multirow{2}{*}{$1$} & $\frac{4a d^2+6a d+2a-2d^3-6d^2-4d}{6a^2d}$ & $a \geq d$ & \multirow{2}{*}{Equ. \ref{oneandone}}   \\  \cline{5-6}
 &  &  &  & $\frac{-2a^3+4a^2d+6a^2-6ad-4a+2d}{6a d^2}$ & $a \leq d$ & \\ \hline
\multirow{2}{*}{$2$} & \multirow{2}{*}{$2$} & \multirow{2}{*}{$2$} & \multirow{2}{*}{$0$} & $\frac{2d^3+4d^2+d-1}{6a^2d}$ & $a \geq d$ & \multirow{2}{*}{Prop. \ref{TwoDefender}}   \\  \cline{5-6}
 &  &  &  & $\frac{-4a^2 d -2 a^2 +6 a d^2+6 a d+3a-2d-1}{6 a d^2}$ & $a \leq d$ & \\ \hline
\multirow{2}{*}{$3$} & \multirow{2}{*}{$2$} & \multirow{2}{*}{$2$} & \multirow{2}{*}{$2$} & $\frac{12a^3d-6ad^3-12ad^2-12ad-6a+3d^3+10d^2+9d+2}{12a^3d}$ & $a \geq d$ & \multirow{2}{*}{Prop \ref{TwoAttacker}}   \\  \cline{5-6}
 &  &  &  & $\frac{6a^4-9a^3-2a^2+3a+2}{12a^2d^2}$ & $a \leq d$ & \\ \hline
\multirow{2}{*}{$3$} & \multirow{2}{*}{$2$} & \multirow{2}{*}{$2$} & \multirow{2}{*}{$1$} & $\frac{30ad^3+60ad^2+60ad+30a-12d^4-45d^3-70d^2-45d-8}{60a^3d}$ & $a \geq d$ & \multirow{2}{*}{Equ. \ref{oneandone}}   \\  \cline{5-6}
 &  &  &  & $\frac{-42a^4+60a^3d-60a^2d+75a^3-10a^2-15a-8}{60a^2d^2}$ & $a \leq d$ & \\ \hline
\multirow{2}{*}{$3$} & \multirow{2}{*}{$2$} & \multirow{2}{*}{$2$} & \multirow{2}{*}{$0$} & $\frac{6d^4+15d^3+10d^2-1}{30a^3 d}$ & $a \geq d$ & \multirow{2}{*}{Prop \ref{TwoDefender}}   \\  \cline{5-6}
 &  &  &  & $\frac{6a^4-30a^3d-15a^3+30a^2d^2+30a^2d+10a^2-1}{30a^2 d^2}$ & $a \leq d$ & \\ \hline
\end{tabular}
\caption{Generalized Engagement (single-attack) odds in \emph{RISK}}
\end{table}
\section{Acknowledgements} We would like to acknowledge an unpublished manuscript of Steven Miller \\
\url{http://web.williams.edu/go/math/sjmiller/public_html/projects/DieBattle.pdf} \\
that arrives at the same engagement odds as the $k=1$ case in this paper. The first and third authors implemented code to calculate conquer odds as part of a summer research project at Amherst College. Lastly the second and third authors analyzed conquer odds as the number of sides tend to infinity, arriving at a continuous version of conquer odds \cite{HM15}.  

\end{document}